\documentclass[a4paper,12pt,reqno]{amsart}
\usepackage{lmodern,amsmath,amssymb,mathtools,amsthm}
\usepackage{enumerate}
\usepackage{stackengine}
\usepackage{bigints}
\usepackage{geometry}
\usepackage{wasysym}
\usepackage{floatrow,morefloats}
\usepackage{graphicx}
\usepackage{caption}
\usepackage{subcaption}
\usepackage{floatrow,morefloats}
\usepackage{hyperref}
\usepackage{epstopdf}
\usepackage{longtable}
\usepackage{enumitem}  
\usepackage{caption,subcaption}
\newtheorem{lemma}{Lemma}
\newtheorem{theorem}{Theorem}

\newtheorem{proposition}{Proposition}
\numberwithin{equation}{section}
\numberwithin{definition}{section}
\numberwithin{lemma}{section}
\numberwithin{theorem}{section}
\numberwithin{corollary}{section}
\numberwithin{example}{section}
\numberwithin{proposition}{section}

\usepackage{blindtext}
\usepackage{mathrsfs}

\title{Geronimus transformation and Sobolev-type orthogonal polynomials}

\author{N. Neha$^\dagger$}
\address{$^\dagger$Department of Mathematics, Indian Institute of Technology, Roorkee-247667, Uttarakhand, India}
\email{neha@ma.iitr.ac.in, neharani0777@gmail.com}

\allowdisplaybreaks
\bigskip
\newmuskip\pFqmuskip

\catcode`,\active

\catcode`\,12

\begin{document}
	
	\sloppy
	
	\begin{abstract}
		Iterated Geronimus transformations generate Sobolev-type orthogonal polynomials from classical families. We establish a direct equivalence between a Sobolev inner product involving point evaluation and the first derivative at $a$ (located outside the support of the original measure) and two successive Geronimus transformations. Explicit three-term and five-term recurrence relations are derived for the double Geronimus polynomials, revealing their underlying algebraic structure. Connection formulas linking the Sobolev-orthogonal polynomials $Q_n^{M,N}(x)$ with both the original polynomials and the transformed Geronimus polynomials are obtained through Christoffel–Darboux kernels and determinantal representations. In the Jacobi case, a detailed asymptotic analysis shows that the ratios of derivatives and norms converge to explicit constants independent of the perturbation parameters $M$ and $N$. These results provide a unified framework connecting spectral transformation theory with Sobolev orthogonality.
	\end{abstract}
	
	\subjclass[2020] {42C05, 33C45, 41A60}
	\keywords{Orthogonal polynomials, Geronimus transformation, Sobolev inner product, recurrence relations, Jacobi polynomials, asymptotics.}
	
	\maketitle
	
	\markboth{N. Neha}{Double Geronimus transformation and Asymptotics}
	
\section{Introduction}
Orthogonal polynomials and their spectral transformations lie at the heart of approximation theory, integrable systems, and quantum mechanics. While classical orthogonal polynomial families (Jacobi, Laguerre, Hermite) are well-characterized through their weight functions and recurrence relations, modern applications in spectral theory \cite{Ismail book 2005} and Sobolev orthogonal polynomials \cite{Marcellan_Xu_2015} demand controlled perturbations of their orthogonality structures. Among such perturbations, the Geronimus transformation – first introduced in 1943 \cite{Geronimus 1943} but overlooked until its rediscovery in integrable lattice systems \cite{Spiridonov Zhedanov Methods Appl. Anal. 1995} – has emerged as a fundamental tool for constructing new orthogonal polynomial sequences through measure modification.

\vspace{0.3em}
Iterated Geronimus transformations applied to classical orthogonal polynomials generate Sobolev-type orthogonal families by systematically incorporating derivative terms into the inner product through layered perturbations. A single Geronimus transformation at a point $a \notin \text{supp}(\mu)=:E$ modifies the measure as $d\mu_g(x) = \frac{1}{x-a} d\mu(x) + M \delta(x-a$), altering orthogonality conditions but not involving derivatives. 
To account for derivatives, the transformation must be iterated at the same point $a$. This repeated process builds rational factors of the form $\frac{1}{(x-a)^k}$ and adds multiple Dirac masses, ultimately leading to an inner product that includes discrete evaluations of the functions and their higher-order derivatives at $a$.

\vspace{0.3em}
A double Geronimus transformation \cite{Derevyagin Marcellan Numer. Algorithms 2014} addresses this limitation by inducing a Sobolev inner product of the form
\begin{align}\label{Sobolev inner product-Geronimus}
	\langle p,q\rangle_{gg} = \int_{E}p(x)q(x)d\mu_{gg}(x) + \mathbf{p}(a)^\top \mathbf{M}_1 \mathbf{q}(a),
\end{align}
where $\textbf{p}(a)=\begin{bmatrix}
	p(a) & p'(a)
\end{bmatrix}^{T}$ and $\mathbf{M}_1$ encodes masses and moments as
\begin{align*}
	\begin{bmatrix}
		b & c\\
		c & d
	\end{bmatrix}-\begin{bmatrix}
		\int_{E}d\mu_{gg}(x) & \int_{E}xd\mu_{gg}(x)\\
		\int_{E}xd\mu_{gg}(x) & \int_{E}x^2d\mu_{gg}(x)
	\end{bmatrix}.
\end{align*}
The subtraction of the moment terms in $\mathbf{M}_1$ guarantees that orthogonality is preserved, even though the inner product combines an integral part with discrete evaluations at the point $a$. Crucially, the transformed measure satisfies $(x-a)^2d\mu_{gg}=d\mu$, maintaining algebraic consistency with the original measure $\mu$. The resulting orthogonal polynomials $\{P_n^{gg}(x)\}_{n\geq0}$ exhibit the relation in terms of $\{P_n(x)\}_{n\geq0}$, which are orthogonal with respect to $\mu$,
\begin{align*}
	P_n^{gg}(x)= P_n(x)+B_nP_{n-1}(x)+C_nP_{n-2}(x),\; n\geq 1,\; C_n\neq0\;\text{for}\;n\geq2.
\end{align*}
This structure reflects the non-diagonal Sobolev inner product, inducing higher-order recurrence relations distinct from classical three-term recursions \cite{Alfaro Pena Rezola Marcellan Comput. Math. Appl. 2011,Hounga Hounkonnou Ronveaux J. Comput. Appl. Math. 2006}. Furthermore, from \eqref{Sobolev inner product-Geronimus}, we have $\langle (x-a)^2p,q\rangle_{gg}=\langle p,q\rangle_{0}:=\int_{E}p(x)q(x)d\mu(x)$. These types of families are well explored in the literature (see \cite{Alfaro Pena Rezola Marcellan Comput. Math. Appl. 2011, Hounga Hounkonnou Ronveaux J. Comput. Appl. Math. 2006} and reference therein). For further iterations of the Geronimus transformation, see the results presented in \cite{Alfaro Pena Rezola Marcellan J. Comput. Appl. Math. 2010, Kwon Lee Marcellan Park Ann. Mat. Pura Appl. 2001}. The Geronimus introduced the transformation in \cite{Geronimus 1943, Geronimus 1961} as a method for constructing new sequences of orthogonal polynomials from existing families of known sequences. It has also been examined in connection with different schemes of mechanical quadratures (see \cite{Shohat Trans. Amer. Math. Soc. 1937}). Further references on the Geronimus transformation can be found in \cite{Bueno Marcellan Linear Algebra Appl. 2004} and \cite{Bueno Deano Tavernetti Numer. Algorithms 2010}. The case of multiple Geronimus transformations is studied in \cite{Derevyagin Garcia Marcellan Linear Algebra Appl. 2014}, where it is shown that such iterations lead to discrete Sobolev-type inner products.
 
\vspace{0.3em}
Sobolev orthogonality often arises from spectral transformations (e.g., Darboux or Geronimus) that perturb classical measures. For instance, the Krall-Jacobi polynomials, arising from iterated Darboux transformations of classical Jacobi polynomials \cite{Iliev J. Math. Pures Appl. 2011}, exemplify how spectral methods generate Sobolev orthogonal systems. These polynomials satisfy higher-order differential equations, reflecting the action of commutative algebras of operators, a structure mirrored in Sobolev inner products that incorporate derivatives as perturbations. The iterated Christoffel transformations \cite{Hermoso Huertas Lastra Marcellan Numer. Algorithms 2023} modify moments in ways equivalent to introducing point masses or derivative couplings, as seen in coherent pairs \cite{Marcellan_Xu_2015}. These systems reflect a unifying structure where spectral perturbations (point masses, derivatives) generate commutative algebras of differential operators, as seen in bispectral frameworks \cite{Grunbaum Haine Horozov 1999}. This synthesis underscores a spectral-theoretic duality: perturbations via spectral transformations (Geronimus, Darboux, Uvarov) inherently encode derivative terms into the orthogonality structure, as demonstrated by Krall-type systems \cite{Alvarez_Arvesu_Marcellan_2004}. The Sobolev orthogonality characteristic of these systems reveals a unifying spectral framework, wherein perturbations manifest through point masses or modifications of moments, thereby introducing derivative components \cite{Alfaro Pena Rezola Marcellan Comput. Math. Appl. 2011, Hounga Hounkonnou Ronveaux J. Comput. Appl. Math. 2006}.

\vspace{0.3em}
In this manuscript, we examine the relationship between iterated Geronimus transformations and Sobolev-type orthogonal polynomials generated by inner products that include point evaluations and derivatives. In particular, we consider the monic polynomial sequence $\{Q_n^{M,N}(x)\}_{n=0}^{\infty}$ that is orthogonal with respect to the Sobolev inner product
\begin{align}\label{Sobolev inner product}
	\langle p,q\rangle_{S}=\int_{E}p(x)q(x)d\mu(x)+Mp(a)q(a)+Np'(a)q'(a),\quad a\notin \text{supp}(\mu),
\end{align}
defined on the space $\mathbb{P}$ of polynomials with real coefficients. This inner product introduces a double perturbation at $x=a$, combining both function values and first derivatives—a structure that naturally arises in problems involving spectral data modifications or boundary conditions. 
 
\vspace{0.3em}
Building on foundational work from \cite{Bavinck_1998}, we observe that the polynomials $Q_n^{M,N}(x)$ can be expressed in terms of the original orthogonal polynomials $P_n(x)$ associated with $\mu$, analogous to the explicit relations derived for classical orthogonal polynomials under iterated Geronimus transformations (see \cite{Derevyagin Marcellan Numer. Algorithms 2014}). We show that applying two successive Geronimus transformations to the polynomial sequence $\{P_n(x)\}_{n\geq0}$ produces the Sobolev orthogonal polynomials $Q_n^{M,N}(x)$, thereby establishing a direct connection between discrete spectral transformations and Sobolev inner products involving derivatives. The primary aim of this work is to analyze the interplay between the Sobolev-modified polynomials $Q_n^{M,N}(x)$ and their iteratively transformed counterparts $P_n^{gg}(x)$ at the critical point $x=a$. Unlike previous studies, which primarily focus on single-step transformations, our work explores the recurrence relations and asymptotic properties of orthogonal systems generated through iterated Geronimus transformations.

\vspace{0.3em}
The explicit formula and matrix representation of the double Geronimus polynomials were presented in \cite{Derevyagin Marcellan Numer. Algorithms 2014}. This manuscript extends the study by exploring additional properties of the double Geronimus polynomials and examining how successive Geronimus transformations influence the structure and asymptotic behavior of Sobolev-type orthogonal polynomials.
The remainder of this manuscript is as follows: In Section~\ref{Relations for double Geronimus transformation polynomials}, we derive recurrence relations for the double Geronimus polynomials $\{P_n^{gg}(x)\}$, including both three-term and higher-order forms induced by the quadratic factor $(x-a)^2$, revealing their algebraic and orthogonality structure. Section~\ref{connection formulas} introduces a Sobolev inner product perturbed by point evaluations and derivatives, establishes explicit connection formulas between the Sobolev-orthogonal polynomials $Q_n^{M, N}(x)$ and $P_n^{gg}(x)$ through kernel polynomials and determinant representations. These results clarify how discrete spectral modifications at $x = a$ influence the Sobolev framework. Section~\ref{asymptotics} presents the asymptotic behavior of the Jacobi polynomials $P_n^{(\alpha,\beta)}(x)$ and the associated Sobolev-type orthogonal polynomials $Q_n^{M,N}(x)$. It includes detailed comparisons of norms and the study of their behavior under successive transformations.

\section{Relations for double Geronimus transformation polynomials}\label{Relations for double Geronimus transformation polynomials}
As we know, orthogonal polynomials \cite{Chihara Book 1978} satisfy a three-term recurrence relation given by
\begin{align}\label{Three-term recurrence relation}
		xP_n(x)=P_{n+1}(x)+\beta_n P_{n}(x)+\gamma_n P_{n-1}(x), \quad n\geq0.
\end{align}
Since the polynomials $\{P_n^{gg}(x)\}$ are also orthogonal, it is natural to explore the recurrence relation they satisfy and how their recurrence coefficients depend on $\beta_n$ and $\gamma_n$. We also derive alternative recurrence relations satisfied by $\{P_n^{gg}(x)\}$.

\begin{theorem}
The double Geronimus polynomials $\{P_n^{gg}(x)\}_{n\geq0}$ satisfy the three-term recurrence relation
	\begin{align}\label{three-term recurrence relation for Double Geronimus polynomials}
		P_{n+1}^{gg}(x)=(x-a-\sigma_{n,n}^{gg})P_n^{gg}(x)-\sigma_{n,n-1}^{gg}P_{n-1}^{gg}(x),\quad n\geq0,
	\end{align}
	where
	\begin{align*}
		\sigma_{n,n}^{gg}=B_n-B_{n+1}+(\beta_n-a)\quad{\mbox{and}}\quad \sigma_{n,n-1}^{gg}=\frac{C_n\lVert P_{n-2}\rVert^2_{0}}{C_{n-1}\lVert P_{n-3}\rVert^2_{0}}.
	\end{align*}
	\begin{proof}
	Since $\{P_n^{gg}(x)\}_{n \geq 0}$ forms a standard orthogonal sequence, we can write
		\begin{align*}
			(x-a)P_n^{gg}(x)=P_{n+1}^{gg}(x)+\sigma_{n,n}^{gg}P_n^{gg}(x)+\sigma_{n,n-1}^{gg}P_{n-1}^{gg}(x),
		\end{align*}
with coefficients given by
	\begin{align*}
		\sigma_{n,n}^{gg}=\frac{\langle(x-a)P_n^{gg}(x),P_n^{gg}(x)\rangle_{gg}}{\langle P_n^{gg}(x),P_n^{gg}(x)\rangle_{gg}} \quad{\mbox{and}}\quad\sigma_{n,n-1}^{gg}=\frac{\langle(x-a)P_n^{gg}(x),P_{n-1}^{gg}(x)\rangle_{gg}}{\langle P_{n-1}^{gg}(x),P_{n-1}^{gg}(x)\rangle_{gg}}.
	\end{align*}
From the expansion
\[
P_n^{gg}(x) = P_n(x) + B_n P_{n-1}(x) + C_n P_{n-2}(x),
\]
we compute
\begin{align*}
	\langle (x-a) P_n^{gg}, P_n^{gg} \rangle_{gg}
	&= \int_E P_n^{gg}(x) (x - a) P_n(x) \, d\mu_{gg}(x) \\
	&\quad + B_n \int_E P_n^{gg}(x)(x - a) P_{n-1}(x) \, d\mu_{gg}(x) \\
	&\quad + C_n \int_E P_n^{gg}(x)(x - a) P_{n-2}(x) \, d\mu_{gg}(x).
\end{align*}
Using the recurrence relation \eqref{Three-term recurrence relation}
\[
(x - a) P_n(x) = P_{n+1}(x) + (\beta_n - a) P_n(x) + \gamma_n P_{n-1}(x),
\]
we get
\begin{align*}
	\langle (x - a) P_n^{gg}, P_n^{gg} \rangle_{gg}
	&= \int_E P_n^{gg}(x) P_{n+1}(x) \, d\mu_{gg}(x) + (\beta_n - a) \int_E P_n^{gg}(x) P_n(x) \, d\mu_{gg}(x) \\
	&\quad + B_n \int_E P_n^{gg}(x) P_n(x) \, d\mu_{gg}(x) \\
	&= \int_E P_n^{gg}(x) \left( P_{n+1}^{gg}(x) - B_{n+1} P_n(x) - C_{n+1} P_{n-1}(x) \right) d\mu_{gg}(x) \\
	&\quad + (B_n + \beta_n - a) \|P_n^{gg}\|^2_{gg} \\
	&= (B_n - B_{n+1} + \beta_n - a) \|P_n^{gg}\|^2_{gg}.
\end{align*}
Similarly,
\begin{align*}
	\langle (x - a) P_n^{gg}(x), P_{n-1}^{gg}(x) \rangle_{gg}
	= \int_E P_n^{gg}(x) (x - a) P_{n-1}^{gg}(x) \, d\mu_{gg}(x)
	= \|P_n^{gg}\|^2_{gg}.
\end{align*}
Moreover, the norm satisfies
\begin{align}\label{eq:norm_relation}
	\|P_k^{gg}\|^2_{gg} = \langle (x - a)^2 P_{k-2}^{gg}(x), P_k^{gg}(x) \rangle_{gg} = \langle P_{k-2}^{gg}(x), P_k^{gg}(x) \rangle_0 = C_k \|P_{k-2}\|^2_0.
\end{align}
Hence, we achieve the required result.
	\end{proof}
\end{theorem}
For the Jacobi measure with $a = -1$, the double Geronimus-modified measure becomes
\[
d\mu_{gg}(x) = (1 - x)^{\alpha} (1 + x)^{\beta - 2}.
\]
Let $\{P_n^{(\alpha,\beta)}(x)\}_{n=0}^\infty$ denote the monic Jacobi polynomials orthogonal with respect to $d\mu(x)\\= (1 - x)^{\alpha} (1 + x)^{\beta}$, and let $\{P_n^{(\alpha,\beta - 2)}(x)\}_{n=0}^\infty$ be orthogonal with respect to $d\mu_{gg}(x)$.
Then the relationship between these polynomials is
\begin{align*}
	P_n^{(\alpha,\beta - 2)}(x)
	&= P_n^{(\alpha,\beta)}(x) + \frac{4n(\alpha + n)}{(2n + \alpha + \beta - 2)(2n + \alpha + \beta)} P_{n-1}^{(\alpha,\beta)}(x) \\
	&\quad + \frac{4n(n-1)(\alpha + n)(\alpha + n - 1)}{(2n + \alpha + \beta - 3)(2n + \alpha + \beta - 2)^2(2n + \alpha + \beta - 1)} P_{n-2}^{(\alpha,\beta)}(x).
\end{align*}
For the monic Jacobi polynomials $P_n^{(\alpha,\beta)}(x)$ with $a = -1$, the recurrence relation \eqref{three-term recurrence relation for Double Geronimus polynomials} simplifies to
\begin{align*}
	&xP_{n}^{(\alpha,\beta-2)}(x)=P_{n+1}^{(\alpha,\beta-2)}(x)+\frac{(\beta-2)^2-\alpha^2}{(2n+\alpha+\beta-2)(2n+\alpha+\beta)}P_{n}^{(\alpha,\beta-2)}(x)\\
	&+\frac{4n(\alpha+n)(n+\beta-2)(n+\alpha+\beta-2)}{(2n+\alpha+\beta-3)(2n+\alpha+\beta-2)^2(2n+\alpha+\beta-1)}P_{n-1}^{(\alpha,\beta-2)}(x),\quad\beta>1,\;\alpha>-1.
\end{align*}
\vspace{0.5em}
\begin{proposition}
	The double Geronimus polynomials $\{P_n^{gg}(x)\}_{n=0}^{\infty}$ satisfy the five-term recurrence relation
	\begin{align*}
		(x-a)^2P_n^{gg}(x)&=P_{n+2}^{gg}(x)+\alpha_{n+1,n+1}^{gg}P_{n+1}^{gg}(x)+\alpha_{n+1,n}^{gg}P_n^{gg}(x)+\alpha_{n+1,n-1}^{gg}P_{n-1}^{gg}(x)\\
		&+\alpha_{n+1,n-2}^{gg}P_{n-2}^{gg}(x),
	\end{align*}
where the coefficients are given by
\begin{align*}
	&\alpha_{n+1,n+1}^{gg}=\frac{C_{n+1}B_n\lVert P_{n-1}\rVert^2_{0}+B_{n+1}\lVert P_n\rVert^2_{0}}{C_{n+1}\lVert P_{n-1}\rVert^2_{0}},\quad\alpha_{n+1,n}^{gg}=\frac{\lVert P_n\rVert^2_{0}+B_n^2\lVert P_{n-1}\rVert^2_{0}+C^2_n\lVert P_{n-2}\rVert^2_{0}}{C_n\lVert P_{n-2}\rVert^2_{0}},\\
	&\alpha_{n+1,n-1}^{gg}=\frac{B_n\lVert P_{n-1}\rVert^2_{0}+B_{n-1}C_n\lVert P_{n-2}\rVert^2_{0}}{C_{n-1}\lVert P_{n-3}\rVert^2_{0}},\quad\alpha_{n+1,n-2}^{gg}=\frac{C_n\lVert P_{n-2}\rVert^2_{0}}{C_{n-2}\lVert P_{n-4}\rVert^2_{0}}.
\end{align*}
	\begin{proof}
	Since $(x-a)^2P_n^{gg}(x)$ is a polynomial of degree $n+2$ and the family $\{P_k^{gg}(x)\}_{k\geq 0}$ forms a basis, we can express
		\begin{align*}
			(x-a)^2P_n^{gg}(x)=P_{n+2}^{gg}(x)+\sum_{k=0}^{n+1}\alpha_{n+1,k}^{gg}P_k^{gg}(x),
		\end{align*}
	where
	\begin{align*}
		\alpha_{n+1,k}^{gg}=\frac{\langle(x-a)^2P_n^{gg}(x),P_k^{gg}(x)\rangle_{gg}}{\langle P_k^{gg}(x),P_k^{gg}(x)\rangle_{gg}}.
	\end{align*}
By orthogonality, $\langle (x-a)^2P_n^{gg}(x), P_k^{gg}(x) \rangle_{gg} = 0$ for $k \leq n - 3$, reducing the expansion to
\begin{align*}
	(x-a)^2P_n^{gg}(x)&=P_{n+2}^{gg}(x)+\alpha_{n+1,n+1}^{gg}P_{n+1}^{gg}(x)+\alpha_{n+1,n}^{gg}P_n^{gg}(x)+\alpha_{n+1,n-1}^{gg}P_{n-1}^{gg}(x)\\
	&+\alpha_{n+1,n-2}^{gg}P_{n-2}^{gg}(x).
\end{align*}
We now compute the numerators in the coefficients using the structure of $P_n^{gg}(x)$ in terms of the original orthogonal polynomials $P_n(x)$
\begin{align*}
	\langle (x-a)^2 P_n^{gg}, P_{n+1}^{gg} \rangle_{gg} 
	&= \int_E (x-a)^2 P_n^{gg}(x) P_{n+1}^{gg}(x) \, d\mu_{gg}(x)= \int_E P_n^{gg}(x) P_{n+1}^{gg}(x) \, d\mu(x) \\
	&= \int_E P_n^{gg}(x) \left( P_{n+1}(x) + B_{n+1}P_n(x) + C_{n+1}P_{n-1}(x) \right) d\mu(x) \\
	&= B_{n+1} \lVert P_n \rVert_0^2 + C_{n+1} B_n \lVert P_{n-1} \rVert_0^2.
\end{align*}
Similarly, we compute the remaining inner products:
\begin{align*}
	\langle (x-a)^2 P_n^{gg}(x), P_n^{gg}(x) \rangle_{gg} &= \lVert P_n \rVert_0^2 + B_n^2 \lVert P_{n-1} \rVert_0^2 + C_n^2 \lVert P_{n-2} \rVert_0^2, \\
	\langle (x-a)^2 P_n^{gg}(x), P_{n-1}^{gg}(x) \rangle_{gg} &= B_n \lVert P_{n-1} \rVert_0^2 + B_{n-1}C_n \lVert P_{n-2} \rVert_0^2, \\
	\langle (x-a)^2 P_n^{gg}(x), P_{n-2}^{gg}(x) \rangle_{gg} &= C_n \lVert P_{n-2} \rVert_0^2.
\end{align*}
Substituting these expressions into the definition of $\alpha_{n+1,k}^{gg}$ yields the desired recurrence coefficients.
	\end{proof}
\end{proposition}
\section{Connection formulas}\label{connection formulas}
Building on the recurrence structure established for the double Geronimus polynomials $P_n^{gg}(x)$ in Section~\ref{Relations for double Geronimus transformation polynomials}, we now employ the connection formulas derived in \cite{Hermoso Huertas Lastra Marcellan Numer. Algorithms 2023} to express the Sobolev-type orthogonal polynomials $Q_n^{M,N}(x)$ in terms of the classical Jacobi polynomials $ P_n^{(\alpha,\beta)}(x) $. In particular, by leveraging the Christoffel-Darboux kernel and the determinantal representations introduced in \cite{Hermoso Huertas Lastra Marcellan Numer. Algorithms 2023}, we establish a bridge between the recursive framework of $P_n^{gg}(x)$ and the perturbed orthogonality structure of $Q_n^{M,N}(x)$, adapting these analytical tools to the iterated Geronimus-Sobolev context.

\vspace{0.5em}
The Sobolev-type orthogonal polynomials $Q_n^{M,N}(x)$ admit the following representation in terms of the orthogonal polynomials $P_n(x)$ (see \cite[Lemma 1]{Hermoso Huertas Lastra Marcellan Numer. Algorithms 2023})
\begin{align}\label{Sobolev orthogonal}
	Q_n^{M,N}(x)=P_n(x)-MQ_n^{M,N}(a)K_{n-1}(x,a)-N(Q_n^{M,N})'(a)K_{n-1}^{(0,1)}(x,a),
\end{align}
where $K_n(x,y)$ denotes the Christoffel--Darboux kernel, given by
\begin{align*}
	K_n(x,y)=\sum_{k=0}^{n}\frac{P_k(x)P_k(y)}{\lVert P_k(x)\rVert^2_{0}}.
\end{align*}
To compute the unknown values \( Q_n^{M,N}(a) \) and \( (Q_n^{M,N})'(a) \), we proceed by evaluating equation~\eqref{Sobolev orthogonal} at \( x = a \), and then differentiating it with respect to \( x \), followed by evaluation at \( x = a \),
\begin{align*}
	Q_n^{M,N}(a)&=P_n(a)-MQ_n^{M,N}(a)K_{n-1}(a,a)-N(Q_n^{M,N})'(a)K_{n-1}^{(0,1)}(a,a),\\
	(Q_n^{M,N})'(a)&=P_n'(a)-MQ_n^{M,N}(a)K_{n-1}^{(1,0)}(a,a)-N(Q_n^{M,N})'(-1)K_{n-1}^{(1,1)}(a,a).
\end{align*}
Solving this system of equations yields closed-form expressions for the evaluation and derivative of \( Q_n^{M,N}(x) \) at \( x = a \),
\begin{align}
	Q_n^{M,N}(a)&=\frac{\begin{vmatrix}
			P_n(a)&NK_{n-1}^{(0,1)}(a,a)\\
			P_n'(a)&1+NK_{n-1}^{(1,1)}(a,a)
	\end{vmatrix}}{\begin{vmatrix}
			1+MK_{n-1}(a,a)&NK_{n-1}^{(0,1)}(a,a)\\
			MK_{n-1}^{(1,0)}(a,a)&1+NK_{n-1}^{(1,1)}(a,a)
	\end{vmatrix}},\label{value of sobolev polynomials at -1}\\
	(Q_n^{M,N})'(a)&=\frac{\begin{vmatrix}
			1+MK_{n-1}(a,a)&P_n(a)\\
			MK_{n-1}^{(1,0)}(a,a)	&P_n'(a)
	\end{vmatrix}}{\begin{vmatrix}
			1+MK_{n-1}(a,a)&NK_{n-1}^{(0,1)}(a,a)\\
			MK_{n-1}^{(1,0)}(a,a)&1+NK_{n-1}^{(1,1)}(a,a)
	\end{vmatrix}}.\label{value of derivative of sobolev polynomials at -1}
\end{align}
Substituting these expressions into \eqref{Sobolev orthogonal} yields the determinantal formula for \( Q_n^{M,N}(x) \) in terms of the classical polynomials
\begin{align*}
	Q_n^{M,N}(x)=\frac{\begin{vmatrix}
			P_n(x) & M K_{n-1}(x,a) & N K_{n-1}^{(0,1)}(x,a)\\
			P_n(a) & 1+M K_{n-1}(a,a) & N K_{n-1}^{(0,1)}(a,a)\\
			P_n'(a) & M K_{n-1}^{(0,1)}(a,a) & 1+N K_{n-1}^{(1,1)}(a,a)
			\end{vmatrix}}{\begin{vmatrix}
			1+M K_{n-1}(a,a) & N K_{n-1}^{(0,1)}(a,a)\\
			MK_{n-1}^{(0,1)}(a,a) & 1+N K_{n-1}^{(1,1)}(a,a)
		\end{vmatrix}}.
\end{align*}

The following theorem establishes a critical connection between the Sobolev-type orthogonal polynomials \( \{Q_n^{M,N}(x)\} \) and the iteratively transformed Geronimus-Jacobi polynomials \( \{P_n^{gg}(x)\} \).
\begin{theorem}
	The Sobolev-type orthogonal polynomials $\{Q_n^{M,N}(x)\}_{n\geq0}$ and the double Geronimus polynomials $\{P_n^{gg}(x)\}_{n\geq0}$ satisfy the recurrence relation
	\begin{align*}
		(x-a)^2Q_n^{M,N}(x)&=P_{n+2}^{gg}(x)+\alpha_{n+1,n+1}P_{n+1}^{gg}(x)+\alpha_{n+1,n}P_{n}^{gg}(x)+\alpha_{n+1,n-1}P_{n-1}^{gg}(x)\\
		&+\alpha_{n+1,n-2}P_{n-2}^{gg}(x),
	\end{align*}
where the coefficients \( \alpha_{n+1,k} \) are defined as follows
\begin{align*}
	\alpha_{n+1,n+1}&=\frac{B_{n+1}\lVert P_n\rVert^2_{0}-C_{n+1}(MQ_n^{M,N}(a)P_{n-1}(a)
		+N(Q_n^{M,N})'(a)P_{n-1}'(a))}{C_{n+1}\lVert P_{n-1}\rVert^2_{0}},\\
	\alpha_{n+1,n}&=\frac{1}{C_n\lVert P_{n-2}\rVert^2_{0}}\big(\lVert P_n\rVert^2_{0}-\big(MQ_n^{M,N}(a)P_{n-1}(a)+N(Q_n^{M,N})'(a)P'_{n-1}
	(a)\big)B_n\\
	&-\big(MQ_n^{M,N}(a)P_{n-2}(a)+N(Q_n^{M,N})'(a)(P_{n-2})'(a)\big)C_n\big),\\
	\alpha_{n+1,n-1}&=\frac{1}{C_{n-1}\lVert P_{n-3}\rVert^2_{0}}\big(-MQ_n^{M,N}(a)(P_{n-1}
	(a)+B_{n-1}P_{n-2}(a)-C_{n-1}P_{n-3}(a))\\
	&-N(Q_n^{M,N})'(a)(P'_{n-1}(a)+B_{n-1}P_{n-2}'(a)-C_{n-1}P_{n-3}'(a))\big),\\
	\alpha_{n+1,n-2}&=\frac{1}{C_{n-2}\lVert P_{n-4}\rVert^2_{0}}\big(-MQ_n^{M,N}(a)(P_{n-2}
	(a)+B_{n-2}P_{n-3}(a)-C_{n-2}P_{n-4}(a))\\
	&-N(Q_n^{M,N})'(a)(P'_{n-2}(a)+B_{n-2}(P_{n-3})'(a)-C_{n-2}(P_{n-4})'(a))\big).
\end{align*}
\begin{proof}
We begin by expressing the expansion of \((x-a)^2 Q_n^{M,N}(x)\) in terms of the orthogonal basis \( \{P_k^{gg}(x)\} \)
	\begin{align*}
			(x-a)^2Q_n^{M,N}(x)=P_{n+2}^{gg}(x)+\sum_{k=0}^{n+1}\alpha_{n+1,k}P_k^{gg}(x),
	\end{align*}
where
\begin{align*}
	\alpha_{n+1,k}=\frac{\langle (x-a)^2Q_n^{M,N}(x),P_k^{gg}(x)\rangle_{gg}}{\langle P_k^{gg}(x),P_k^{gg}(x)\rangle_{gg}}.
\end{align*}
We now use the Sobolev inner product \eqref{Sobolev inner product} associated with the measure \(\mu_{gg}\)
\begin{align*}
	\langle p,q\rangle_{S}=\int_{E}p(x)q(x)(x-a)^2d\mu_{gg}(x)+Mp(a)q(a)+Np'(a)q'(a).
\end{align*}
Substituting \( p(x) = Q_n^{M,N}(x) \) and \( q(x) = (x-a)^2 P_k^{gg}(x) \), we get
\begin{align*}
     \langle (x-a)^2Q_n^{M,N}(x),P_k^{gg}(x)\rangle_{gg}&\!=\!
     \langle Q_n^{M,N}(x),(x-a)^2P_k^{gg}(x)\rangle_{gg}\!=\!\langle Q_n^{M,N}(x),(x-a)^2P_k^{gg}(x)\rangle_{S}\\
     &=0 \quad{\mbox{for}}\quad k\leq n-3.
\end{align*}
Thus, we achieve
\begin{align*}
	(x-a)^2Q_n^{M,N}(x)&=P_{n+2}^{gg}(x)+\alpha_{n+1,n+1}P_{n+1}^{gg}(x)+\alpha_{n+1,n}P_{n}^{gg}(x)+\alpha_{n+1,n-1}P_{n-1}^{gg}(x)\\
	&+\alpha_{n+1,n-2}P_{n-2}^{gg}(x).
\end{align*}
To compute the coefficients, we proceed term by term. For \(\alpha_{n+1,n+1}\), we use the structure of \(P_{n+1}^{gg}(x)\)
\[
P_{n+1}^{gg}(x) = P_{n+1}(x) + B_{n+1} P_n(x) + C_{n+1} P_{n-1}(x),
\]
and since \(\langle Q_n^{M,N}(x), P_{n+1}(x) \rangle_0 = 0\), we find
\begin{align*}
	\langle (x-a)^2Q_n^{M,N}(x),P_{n+1}^{gg}(x)\rangle_{gg}
	&=B_{n+1}\lVert P_n\rVert^2_{0}+C_{n+1}\langle Q_n^{M,N}(x),P_{n-1}(x)\rangle_{0}.
\end{align*}
Using the known structure of \(Q_n^{M,N}(x)\) and kernel identities, we compute
\[
\langle Q_n^{M,N}(x), P_{n-1}(x) \rangle_0 = - M Q_n^{M,N}(a) P_{n-1}(a) - N (Q_n^{M,N})'(a) P_{n-1}'(a),
\]
from which \(\alpha_{n+1,n+1}\) follows directly.

The computations for \(\alpha_{n+1,n}, \alpha_{n+1,n-1}, \alpha_{n+1,n-2}\) proceed analogously using the recurrence relations and inner product identities. Each involves combining the orthogonality of \(P_n(x)\) with the structure of \(Q_n^{M,N}(x)\), and evaluating via the values and derivatives at \(x = a\).
\end{proof}
\end{theorem}
\begin{proposition}
Let $\{P_n^{gg}(x)\}_{n \geq 0}$ be the sequence of orthogonal polynomials associated with the double Geronimus transformation of the measure, and let $\{P_n(x)\}_{n \geq 0}$ be the standard orthogonal polynomial sequence. Then, the following connection formula holds
	\begin{align}\label{relation between double Geronimus polynomials and standard orthogonal polynomials}
		(x-a)^2P_n(x)=P_{n+2}^{gg}(x)+\sigma_{n+1,n+1}P_{n+1}^{gg}(x)+\sigma_{n+1,n}P_{n}^{gg}(x), \quad n\geq 0,
	\end{align}
	where the connection coefficients are given by
	\begin{align*}
		\sigma_{n+1,n+1}=\frac{B_{n+1}\lVert P_{n}\rVert^2_{0}}{C_{n+1}\lVert P_{n-1}\rVert^2_{0}} \quad{\mbox{and}}\quad \sigma_{n+1,n}=\frac{\lVert P_{n}\rVert^2_{0}}{C_{n}\lVert P_{n-2}\rVert^2_{0}}.
	\end{align*}
	\begin{proof}
		Since $(x-a)^2 P_n(x)$ is a polynomial of degree $n+2$ and $\{P_n^{gg}(x)\}_{n \geq 0}$ forms a complete basis, we may expand
		\begin{align*}
			(x-a)^2P_n(x)=P_{n+2}^{gg}(x)+\sum_{k=0}^{n+1}\sigma_{n+1,k}P_{k}^{gg}(x),
		\end{align*}
		where the coefficients are given by the orthogonality relation
		\begin{align*}
			\sigma_{n+1,k}=\frac{\langle (x-a)^2P_n(x),P_k^{gg}(x)\rangle_{gg}}{\langle P_k^{gg}(x),P_k^{gg}(x)\rangle_{gg}}.
		\end{align*}
	Using the structure relation for $P_k^{gg}(x)$
	\[
	P_k^{gg}(x) = P_k(x) + B_k P_{k-1}(x) + C_k P_{k-2}(x),
	\]
	and orthogonality of $\{P_n(x)\}$, we have
		\begin{align*}
			\langle (x-a)^2P_n(x),P_k^{gg}(x)\rangle_{gg}&=\langle P_n(x),P_k^{gg}(x)\rangle_{0}=\langle P_n(x),P_k(x)+B_kP_{k-1}(x)+C_kP_{k-2}(x)\rangle_{0}\\
			&=0 \quad{\mbox{for}} \quad k\leq n-1,
		\end{align*}
		it follows that
		\begin{align*}
			(x-a)^2P_n(x)=P_{n+2}^{gg}(x)+\sigma_{n+1,n+1}P_{n+1}^{gg}(x)+\sigma_{n+1,n}P_{n}^{gg}(x).
		\end{align*}
	Computing the non-zero numerators explicitly
		\begin{align*}
			\langle (x-a)^2P_n(x),P_{n+1}^{gg}(x)\rangle_{gg}&\!=\!\langle P_n(x),P_{n+1}(x)+B_{n+1}P_{n}(x)+C_{n+1}P_{n-1}(x)\rangle_{0}\!=\!B_{n+1}\lVert P_{n}\rVert^2_{0},\\
			\langle (x-a)^2P_n(x),P_{n}^{gg}(x)\rangle_{gg}&=\langle P_n(x),P_{n}(x)+B_{n}P_{n-1}(x)+C_{n}P_{n-2}(x)\rangle_{0}=\lVert P_{n}\rVert^2_{0}.
		\end{align*}
	The result follows by substituting into the definition of $\sigma_{n+1,k}$.
	\end{proof}
\end{proposition}

\vspace{1em}
\noindent
For the special case of the monic Jacobi polynomials $P_n^{(\alpha,\beta)}(x)$ and the double Geronimus transformation applied at $a = -1$, identity \eqref{relation between double Geronimus polynomials and standard orthogonal polynomials} becomes
\begin{align*}
	(1+x)^2P_n^{(\alpha,\beta)}(x)&=P_{n+2}^{(\alpha,\beta-2)}(x)+\frac{4(n+\beta)(n+\alpha+\beta)}{(2n+\alpha+\beta)(2n+\alpha+\beta+2)}P_{n+1}^{(\alpha,\beta-2)}(x)\\
	&+\frac{4(n+\beta-1)(n+\beta)(n+\alpha+\beta-1)(n+\alpha+\beta)}{(2n+\alpha+\beta-1)(2n+\alpha+\beta)^2(2n+\alpha+\beta+1)}P_{n}^{(\alpha,\beta-2)}(x).
\end{align*}
\noindent
This identity reveals how the multiplication by $(1+x)^2$ induces a shift in the Jacobi weight exponent $\beta \mapsto \beta - 2$, expressed explicitly in terms of the transformed orthogonal basis.

\section{Asymptotics}\label{asymptotics}
This section establishes key asymptotic results for Sobolev-Jacobi polynomials 
$Q_n^{M,N}(x)$ and $P_n^{gg}(x)$, derived from iterated Geronimus transformations. These results are critical for understanding the stability and convergence of spectral methods in Sobolev Orthogonal polynomials.
\begin{lemma}\rm(Gamma function Scaling).
	For $k,l\in\mathbb{N}\cup\{0\}$,
	\begin{align}\label{Gamma equality}
		\lim_{n\to+\infty}\frac{n^{k-l}\Gamma(n+l)}{\Gamma(n+k)}=1.
	\end{align}
\end{lemma}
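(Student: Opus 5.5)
Since $k$ and $l$ are nonnegative integers, the plan is to avoid asymptotic expansions of $\Gamma$ and use only the functional equation $\Gamma(z+1)=z\Gamma(z)$. That turns the quotient into a finite product of rational factors in $n$, each of which tends to $1$.

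First, by symmetry it suffices to treat $k\ge l$. If $k<l$, the expression in \eqref{Gamma equality} is the reciprocal of the same expression with $k$ and $l$ interchanged. A sequence of positive numbers tending to $1$ has reciprocals tending to $1$, so the case $k<l$ follows from the case $k\geq l$. The case $k=l$ is trivial, since the expression equals $1$ for every $n$.

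Second, for $k>l$, iterate the functional equation $k-l$ times:
\begin{align*}
\Gamma(n+k)=\Gamma(n+l)\prod_{j=l}^{k-1}(n+j),\qquad n\geq1,
\end{align*}
which is valid because $n+l>0$, so no poles are involved. Hence
\begin{align*}
\frac{n^{k-l}\Gamma(n+l)}{\Gamma(n+k)}=\prod_{j=l}^{k-1}\frac{n}{n+j}.
\end{align*}
The number of factors, $k-l$, is fixed and independent of $n$. Each factor $\frac{n}{n+j}=\left(1+\frac{j}{n}\right)^{-1}$ tends to $1$ as $n\to+\infty$, so the finite product tends to $1$, which proves the lemma.

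There is no genuine obstacle here. The only point needing care is that the product has a fixed, finite number of factors, which is what justifies exchanging the limit with the product. For this reason the restriction $k,l\in\mathbb{N}\cup\{0\}$ matters: for non-integer shifts one would instead appeal to Stirling's formula or Wendel's inequality, but that is not needed here.
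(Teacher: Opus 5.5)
Your proof is correct and complete for the lemma as stated. For $k\ge l$, the functional equation gives the exact identity $n^{k-l}\Gamma(n+l)/\Gamma(n+k)=\prod_{j=l}^{k-1} n/(n+j)$. This is a product of a fixed number of factors, each tending to $1$. The case $k<l$ follows by taking reciprocals of positive quantities. The paper states this lemma without proof, treating it as a standard fact, so there is no competing argument to compare with. Your telescoping argument is the natural one for integer shifts.

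There is one point to keep in mind, which concerns the paper rather than your proof. In the proof of the next lemma, on derivatives and norms of Jacobi polynomials, the paper applies this scaling limit to quotients such as $n^{\alpha}\Gamma(n+1)/\Gamma(n+\alpha+1)$, $n^{\alpha+\beta}\Gamma(n+1)/\Gamma(n+\alpha+\beta+1)$ and $n^{-k-\beta}\Gamma(n+\beta+1)/\Gamma(n-k+1)$. In these, the difference of the two shifts involves the real parameters $\alpha,\beta$ and is generally not an integer. The functional equation then no longer reduces the quotient to finitely many rational factors. One genuinely needs the real-parameter version $\lim_{n\to\infty} n^{b-a}\Gamma(n+a)/\Gamma(n+b)=1$, obtained from Stirling's formula or from Wendel's (or Gautschi's) inequality via log-convexity of $\Gamma$ --- the tools you mention at the end. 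Quotients like $n^{-k}\Gamma(n+c+k)/\Gamma(n+c)$ with real $c$ and integer $k$, which also occur there, are still handled by your telescoping. So your argument settles the statement as written, but that statement is narrower than how the paper later uses it.
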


\begin{lemma}\label{Derivative and norm of Jacobi polynomials}\rm(Derivative and norm of Jacobi polynomials).
	For $k\in\mathbb{N}\cup\{0\}$, the scaled monic Jacobi polynomials $\tilde{P}_n^{(\alpha,\beta)}(x):=\frac{(n+\alpha+\beta+1)_n}{2^n(\alpha+1)_n}P_n^{(\alpha,\beta)}(x)$ satisfy
	\begin{align}\label{Asymptotes of Jacobi at -1}
		\lim_{n\to+\infty}(-1)^n\frac{(\tilde{P}_n^{(\alpha,\beta)})^{(k)}(-1)}{n^{-\alpha+\beta+2k}}=\frac{\Gamma(\alpha+1)}{(-2)^k\Gamma(\beta+k+1)},
	\end{align}
and
\begin{align}\label{limiting value of norm of jacobi}
	\lim_{n\to+\infty}n^{2\alpha+1}\lVert\tilde{P}_n^{(\alpha,\beta)}\rVert^2_{\mu}=2^{\alpha+\beta}(\Gamma(\alpha+1))^2.
\end{align}
\begin{proof}
	Compute the $kth$ derivative of $\tilde{P}_n^{(\alpha,\beta)}(x)$ and  evaluate it at $x=-1$.
	\begin{align*}
		(\tilde{P}_n^{(\alpha,\beta)})^{(k)}(-1)&=\frac{\Gamma(\alpha+\beta+n+1+k)}{2^k\Gamma(\alpha+\beta+n+1)}\tilde{P}_{n-k}^{(\alpha+k,\beta+k)}(-1)\\
		&=\frac{n!}{(\alpha+1)_n}\frac{\Gamma(\alpha+\beta+n+1+k)}{2^k\Gamma(\alpha+\beta+n+1)}(-1)^{n-k}\binom{n+\beta}{n-k}\\
		&=\frac{\Gamma(n+1)\Gamma(\alpha+1)}{\Gamma(n+\alpha+1)}\frac{\Gamma(\alpha+\beta+n+1+k)}{2^k\Gamma(\alpha+\beta+n+1)}\frac{(-1)^{n-k}\Gamma(n+\beta+1)}{\Gamma(n-k+1)\Gamma(\beta+k+1)}\\
		&=\frac{1}{(-2)^k}\frac{\Gamma(\alpha+1)}{\Gamma(\beta+k+1)}\bigg(\frac{(-1)^n}{n^{\alpha-\beta-2k}}\frac{n^{\alpha}\Gamma(n+1)}{\Gamma(n+\alpha+1)}\frac{n^{-k}\Gamma(\alpha+\beta+n+1+k)}{\Gamma(\alpha+\beta+n+1)}\\
		&\times\frac{n^{-k-\beta}\Gamma(n+\beta+1)}{\Gamma(n-k+1)}\bigg).
	\end{align*}
Using \eqref{Gamma equality}, we obtain the required result \eqref{Asymptotes of Jacobi at -1}.
\begin{align*}
	\lVert\tilde{P}_n^{(\alpha,\beta)}\rVert^2_{\mu}&=\frac{2^{\alpha+\beta+1}}{2n+\alpha+\beta+1}\frac{\Gamma(n+\alpha+1)\Gamma(n+\beta+1)}{n!\Gamma(n+\alpha+\beta+1)}\bigg(\frac{n!}{(\alpha+1)_n}\bigg)^2\\
	&=\frac{2^{\alpha+\beta+1}\Gamma(n+1)\Gamma(n+\beta+1)\Gamma(n+\alpha+1)(\Gamma(\alpha+1))^2}{(2n+\alpha+\beta+1)\Gamma(n+\alpha+\beta+1)(\Gamma(n+\alpha+1))^2}\\
	&=\frac{2^{\alpha+\beta+1}(\Gamma(\alpha+1))^2}{n^{2\alpha+1}}\frac{n^{\alpha+\beta}\Gamma(n+1)}{\Gamma(n+\alpha+\beta+1)}\frac{n^{\alpha-\beta}\Gamma(n+\beta+1)}{\Gamma(n+\alpha+1)}\frac{n}{2n+\alpha+\beta+1}
\end{align*}
By using \eqref{Gamma equality}, we arrive at the desired outcome stated in \eqref{limiting value of norm of jacobi}.
\end{proof}
\end{lemma}
\begin{lemma} \label{derivative of CD kernel}
For $k, s \geq 0$, the derivatives of the Christoffel-Darboux kernel for Jacobi polynomials at $x = a = -1$ satisfy
	\begin{align*}
		\lim_{n\to+\infty}\frac{K_{n-1}^{(k,s)}(-1,-1)}{n^{2\beta+2k+2s+2}}=\frac{(-1)^{k+s}}{2^{\alpha+\beta+k+s+1}(\beta+k+s+1)\Gamma(\beta+k+1)\Gamma(\beta+s+1)}.
	\end{align*}
This scaling reflects the dominance of high-order terms in Sobolev norms as $n$ grows.
\begin{proof}
	Apply Stolz’s criterion to convert the limit into a difference quotient
	\begin{align*}
		\lim_{n\to+\infty}\frac{K_{n-1}^{(k,s)}(-1,-1)}{n^{2\beta+2k+2s+2}}&=\lim_{n\to+\infty}\frac{K_{n-1}^{(k,s)}(-1,-1)-K_{n-2}^{(k,s)}(-1,-1)}{n^{2\beta+2k+2s+2}-(n-1)^{2\beta+2k+2s+2}}.
	\end{align*}
Apply the Christoffel–Darboux formula to simplify the numerator, and use the binomial expansion to reduce the denominator.
	\vspace{-0.3cm}
\begin{align*}
		\lim_{n\to+\infty}\frac{K_{n-1}^{(k,s)}(-1,-1)}{n^{2\beta+2k+2s+2}}&=\lim_{n\to+\infty}\frac{\dfrac{(\tilde{P}_{n-1}^{(\alpha,\beta)})^{(k)}(-1)(\tilde{P}_{n-1}^{(\alpha,\beta)})^{(s)}(-1)}{\lVert \tilde{P}_{n-1}^{(\alpha,\beta)}\rVert^2_{\mu}}}{(2\beta+2k+2s+2)n^{2\beta+2k+2s+1}}\\
		&=\frac{1}{2\beta+2k+2s+2}\lim_{n\to+\infty}\frac{(\tilde{P}_{n-1}^{(\alpha,\beta)})^{(k)}(-1)}{n^{-\alpha+\beta+2k}}\frac{(\tilde{P}_{n-1}^{(\alpha,\beta)})^{(s)}(-1)}{n^{-\alpha+\beta+2s}}\frac{n^{-2\alpha-1}}{\lVert \tilde{P}_{n-1}^{(\alpha,\beta)}\rVert^2_{\mu}}.
\end{align*}
By employing \eqref{Asymptotes of Jacobi at -1} and \eqref{limiting value of norm of jacobi}, we arrive at the desired outcome.
\end{proof}
\end{lemma}
\begin{proposition}\label{Ratio of derivative of Sobolev and Jacobi}
	Let $j$ be a non-negative integer. Then, we have
	\begin{align*}
		\lim_{n\to+\infty}\frac{(Q_n^{M,N})^{(j)}(-1)}{(\tilde{P}_n^{(\alpha,\beta)})^{(j)}(-1)}=\frac{j(j-1)}{(\beta+j+1)(\beta+j+2)}
	\end{align*}
\begin{proof}
	Using value of $Q_n^{M,N}(x)$ from \eqref{Sobolev orthogonal}, we have
	{\small{\begin{align*}
				&\lim_{n\to+\infty}\frac{(Q_n^{M,N})^{(j)}(-1)}{(\tilde{P}_n^{(\alpha,\beta)})^{(j)}(-1)} \\
				&\!=\!\lim_{n\to+\infty} 1\! 
				-\! \frac{M \!\left( \tilde{P}_n^{(\alpha,\beta)}(-1)(1\! +\! N K_{n-1}^{(1,1)}(-1,\!-1))\! -\! N (\tilde{P}_n^{(\alpha,\beta)})'(-1) \!K_{n-1}^{(1,0)}(-1,\!-1) \right)\! K_{n-1}^{(0,j)}(-1,-1)}{\mathcal{D}\cdot (\tilde{P}_n^{(\alpha,\beta)})^{(j)}(-1)} \\
				& - \frac{N \left( (1 + M K_{n-1}(-1,-1))(\tilde{P}_n^{(\alpha,\beta)})'(-1) - M K_{n-1}^{(1,0)}(-1,-1) \tilde{P}_n^{(\alpha,\beta)}(-1) \right) K_{n-1}^{(1,j)}(-1,-1)}{\mathcal{D}\cdot (\tilde{P}_n^{(\alpha,\beta)})^{(j)}(-1)}, \\
				&\text{where } \mathcal{D} = 1 + N K_{n-1}^{(1,1)}(-1,-1) + M K_{n-1}(-1,-1) + MN K_{n-1}(-1,-1) K_{n-1}^{(1,1)}(-1,-1) \\
				&\quad - MN \left( K_{n-1}^{(1,0)}(-1,-1) \right)^2.
	\end{align*}}}
	We denote the expression
	\[
	M \left( \tilde{P}_n^{(\alpha,\beta)}(-1)\left(1 + N K_{n-1}^{(1,1)}(-1,-1)\right) - N \left(\tilde{P}_n^{(\alpha,\beta)}\right)'(-1) K_{n-1}^{(1,0)}(-1,-1) \right) K_{n-1}^{(0,j)}(-1,-1)
	\]
	as $\mathcal{T}_1$, and the expression
	\[
	N \left( \left(1 + M K_{n-1}(-1,-1)\right)\left(\tilde{P}_n^{(\alpha,\beta)}\right)'(-1) - M K_{n-1}^{(1,0)}(-1,-1) \tilde{P}_n^{(\alpha,\beta)}(-1) \right) K_{n-1}^{(1,j)}(-1,-1)
	\]
	as $\mathcal{T}_2$.
	Now, we analyze the asymptotic behavior term by term using Lemmas~\ref{Derivative and norm of Jacobi polynomials} and \ref{derivative of CD kernel}.
	\begin{align*}
		\tilde{P}_n^{(\alpha,\beta)}(-1) &\sim \frac{(-1)^n n^{-\alpha+\beta}\Gamma(\alpha+1)}{\Gamma(\beta + 1)}, \quad(\tilde{P}_n^{(\alpha,\beta)})'(-1) \sim \frac{(-1)^n n^{-\alpha+\beta + 2}\Gamma(\alpha+1)}{(-2)\Gamma(\beta + 2)}, \\
		(\tilde{P}_n^{(\alpha,\beta)})^{(j)}(-1) &\sim \frac{(-1)^n n^{-\alpha+\beta + 2j}\Gamma(\alpha+1)}{(-2)^j\Gamma(\beta + j + 1)}, \quad
		K_{n-1}(-1,-1) \sim \frac{n^{2\beta + 2}}{2^{\alpha+\beta + 1} (\beta + 1) \Gamma(\beta + 1)^2}, \\
		K_{n-1}^{(1,0)}(-1,-1) &\sim \frac{(-1)n^{2\beta + 4}}{2^{\alpha+\beta + 2} (\beta + 2) \Gamma(\beta + 1) \Gamma(\beta + 2)}, \\
		K_{n-1}^{(1,1)}(-1,-1) &\sim \frac{n^{2\beta + 6}}{2^{\alpha+\beta + 3} (\beta + 3) \Gamma(\beta + 2)^2}, \\
		K_{n-1}^{(0,j)}(-1,-1) &\sim \frac{(-1)^j n^{2\beta + 2j + 2}}{2^{\alpha+\beta + j + 1} (\beta + j + 1) \Gamma(\beta + 1) \Gamma(\beta + j + 1)}\quad\text{and} \\
		K_{n-1}^{(1,j)}(-1,-1) &\sim \frac{(-1)^{j+1} n^{2\beta + 2j + 4}}{2^{\alpha+\beta + j + 2} (\beta + j + 2) \Gamma(\beta + j + 1) \Gamma(\beta + 2)}.
	\end{align*}
	The dominant term in $\mathcal{D}$ is
	\begin{align*}
		\mathcal{D} \sim MN\left( K_{n-1}(-1,-1) K_{n-1}^{(1,1)}(-1,-1)-\left( K_{n-1}^{(1,0)}(-1,-1) \right)^2\right), 
	\end{align*}
	which simplifies to
	\begin{align*}
		\mathcal{D}\sim MN\frac{n^{4\beta+8}}{\left(2^{\alpha+\beta+2}\Gamma(\beta+1)\Gamma(\beta+2)
			\right)^2}\left(\frac{1}{(\beta+1)(\beta+3)}-\frac{1}{(\beta+2)^2}\right).
	\end{align*}
	The determinant factor evaluates to $1/(\beta+1)(\beta+3)(\beta+2)^2$, leading to
	\begin{align*}
		\mathcal{D}\cdot (\tilde{P}_n^{(\alpha,\beta)})^{(j)}(-1) \sim\frac{MN(-)^j(-1)^nn^{-\alpha+5\beta+2j+8}\Gamma(\alpha+1)}{2^j\left(2^{\alpha+\beta+2}\Gamma(\beta+1)\Gamma(\beta+2)
			\right)^2(\beta+1)(\beta+3)(\beta+2)^2\Gamma(\beta+j+1)}.
	\end{align*}
	Substituting the asymptotic expansions into $\mathcal{T}_1$ and $\mathcal{T}_2$, retaining leading-order terms, and simplifying, we find
	\begin{align*}
		\mathcal{T}_1&\sim-\frac{MN(-1)^j (-1)^nn^{-\alpha+5\beta+2j+8}\Gamma(\alpha+1)}{2^{2\alpha+2\beta+j+4}\left(\Gamma(\beta+1)\right)^2(\Gamma(\beta+2))^2(\beta+2)(\beta+3)(\beta + j + 1) \Gamma(\beta + j + 1)},\\
		\mathcal{T}_2&\sim\frac{MN(-1)^j (-1)^nn^{-\alpha+5\beta+2j+8}\Gamma(\alpha+1)}{2^{2\alpha+2\beta+j+4}\left(\Gamma(\beta+2)\right)^2\left(\Gamma(\beta+1)\right)^2(\beta+1)(\beta+2)(\beta + j + 2) \Gamma(\beta + j + 1)}.
	\end{align*}
	By combining these terms and dividing by the leading term of the denominator, we obtain the desired result.
\end{proof}
\end{proposition}
\begin{proposition}\rm(Norm asymptotics).\label{Norm asymptotics}
	The Sobolev norm satisfy
	\begin{align*}
	\lim_{n\to+\infty}\frac{\lVert Q_n^{(M,N)}\rVert_S}{\lVert \tilde{P}_n^{(\alpha,\beta)}\rVert_{\mu}}=1
	\end{align*}
\begin{proof}The squared Sobolev norm of $Q_n^{(M,N)}(x)$ is
	{\small{\begin{align*}
				\lVert Q_n^{(M,N)}\rVert^2_S
				&=\!\langle Q_n^{(M,N)}(x), \tilde{P}_n^{(\alpha,\beta)}(x)\rangle\!+\!MQ_n^{(M,N)}(-1)\tilde{P}_n^{(\alpha,\beta)}(-1)\!+\!N(Q_n^{(M,N)})'(-1)(\tilde{P}_n^{(\alpha,\beta)})'(-1).
	\end{align*}}}
	Dividing by $\lVert \tilde{P}_n^{(\alpha,\beta)}\rVert^2$
	\begin{align}\label{ratio of Sobolev and original Jacobi}
		&\frac{\lVert Q_n^{(M,N)}\rVert^2_S}{\lVert \tilde{P}_n^{(\alpha,\beta)}\rVert^2}=1+M\frac{Q_n^{(M,N)}(-1)\tilde{P}_n^{(\alpha,\beta)}(-1)}{\lVert \tilde{P}_n^{(\alpha,\beta)}\rVert^2}+N\frac{(Q_n^{(M,N)})'(-1)(\tilde{P}_n^{(\alpha,\beta)})'(-1)}{\lVert \tilde{P}_n^{(\alpha,\beta)}\rVert^2}.
	\end{align}
	From the construction of $Q_n^{M,N}(x)$, we use
	\begin{align*}
		&Q_n^{M,N}(-1)=\frac{\tilde{P}_n^{(\alpha,\beta)}(-1)(1+NK_{n-1}^{(1,1)}(-1,-1))-N(\tilde{P}_n^{(\alpha,\beta)})'(-1)K_{n-1}^{(1,0)}(-1,-1)}{\mathcal{D}},\\
		&(Q_n^{M,N})'(-1)=\frac{(1+MK_{n-1}(-1,-1))(\tilde{P}_n^{(\alpha,\beta)})'(-1)-MK_{n-1}^{(1,0)}(-1,-1)\tilde{P}_n^{(\alpha,\beta)}(-1)}{\mathcal{D}},\\
		&\text{where } \mathcal{D} = 1 + N K_{n-1}^{(1,1)}(-1,-1) + M K_{n-1}(-1,-1) + MN \left(K_{n-1} K_{n-1}^{(1,1)}-\left( K_{n-1}^{(1,0)} \right)^2\right).
	\end{align*}
	Using Lemmas~\ref{Derivative and norm of Jacobi polynomials} and ~\ref{derivative of CD kernel}, substitute the leading asymptotic terms
	\begin{align*}
		\tilde{P}_n^{(\alpha,\beta)}(-1)\sim\mathcal{O}(n^{-\alpha+\beta}),& \quad (\tilde{P}_n^{(\alpha,\beta)})'(-1)\sim\mathcal{O}(n^{-\alpha+\beta+2})\\
		K_{n-1}(-1,-1)\sim \mathcal{O}(n^{2\beta+2}),&\quad K_{n-1}^{(1,0)}(-1,-1)\sim \mathcal{O}(n^{2\beta+4}),\\
		K_{n-1}^{(1,1)}(-1,-1)\sim \mathcal{O}(n^{2\beta+6}),&\quad \lVert \tilde{P}_n^{(\alpha,\beta)}\rVert^2\sim\mathcal{O}(n^{-2\alpha-1}),
	\end{align*}
	where $\mathcal{O}$ is Big O notation. The leading term in $\mathcal{D}$ is $\mathcal{D}\sim\mathcal{O}(n^{4\beta+8})$. Substituting these asymptotics into \eqref{ratio of Sobolev and original Jacobi}
	\begin{align*}
		\frac{\lVert Q_n^{(M,N)}\rVert^2_S}{\lVert P_n^{(\alpha,\beta)}\rVert^2}=1+\frac{1}{\mathcal{O}(n)}+\frac{1}{\mathcal{O}(n)}.
	\end{align*}
	We obtain the required result as $n\to\infty$.
\end{proof}
\end{proposition}
The figure~\ref{Fig1_r_d_s_j} depicts the convergence of the ratio $\frac{(Q_n^{M,N})^{(j)}(-1)}{(\tilde{P}_n^{(\alpha,\beta)})^{(j)}(-1)}$ as $n$ increases, for the case $\alpha=0$, $\beta=1$ and $j=2$, verifying Proposition~\ref{Ratio of derivative of Sobolev and Jacobi}. As $n$ increases, the ratio approaches the red dashed line, confirming the theoretical prediction. The figure~\ref{Fig2_r_n_s_j} illustrates the convergence of the ratio $\frac{\lVert Q_n^{(M,N)}\rVert_S}{\lVert \tilde{P}_n^{(\alpha,\beta)}\rVert_{\mu}}$ as $n$ increases, confirming the asymptotic result stated in Proposition~\ref{Norm asymptotics}, which says that the Sobolev norm of $Q_n^{(M,N)}$ is asymptotically equal to the norm of $\tilde{P}_n^{(\alpha,\beta)}$ as $n\to\infty$.


\begin{figure}[htbp]
	\centering
	
	\begin{subfigure}{0.45\textwidth}
		\centering
		\includegraphics[width=\linewidth]{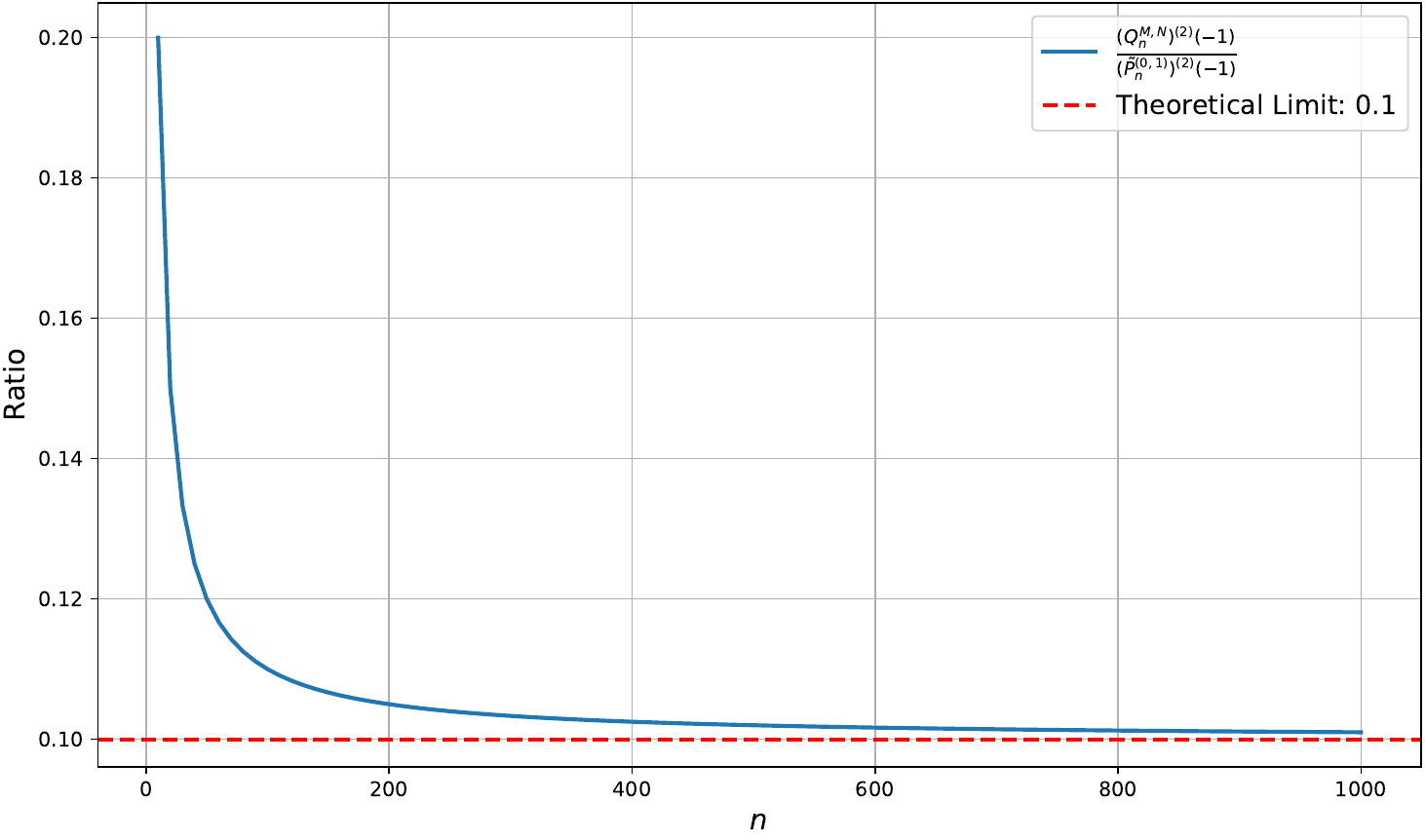}
		\caption{Convergence of the Ratio to the Theoretical Limit}
		\label{Fig1_r_d_s_j}
	\end{subfigure}
	\hfil
	\begin{subfigure}{0.45\textwidth}
		\centering
		\includegraphics[width=\linewidth]{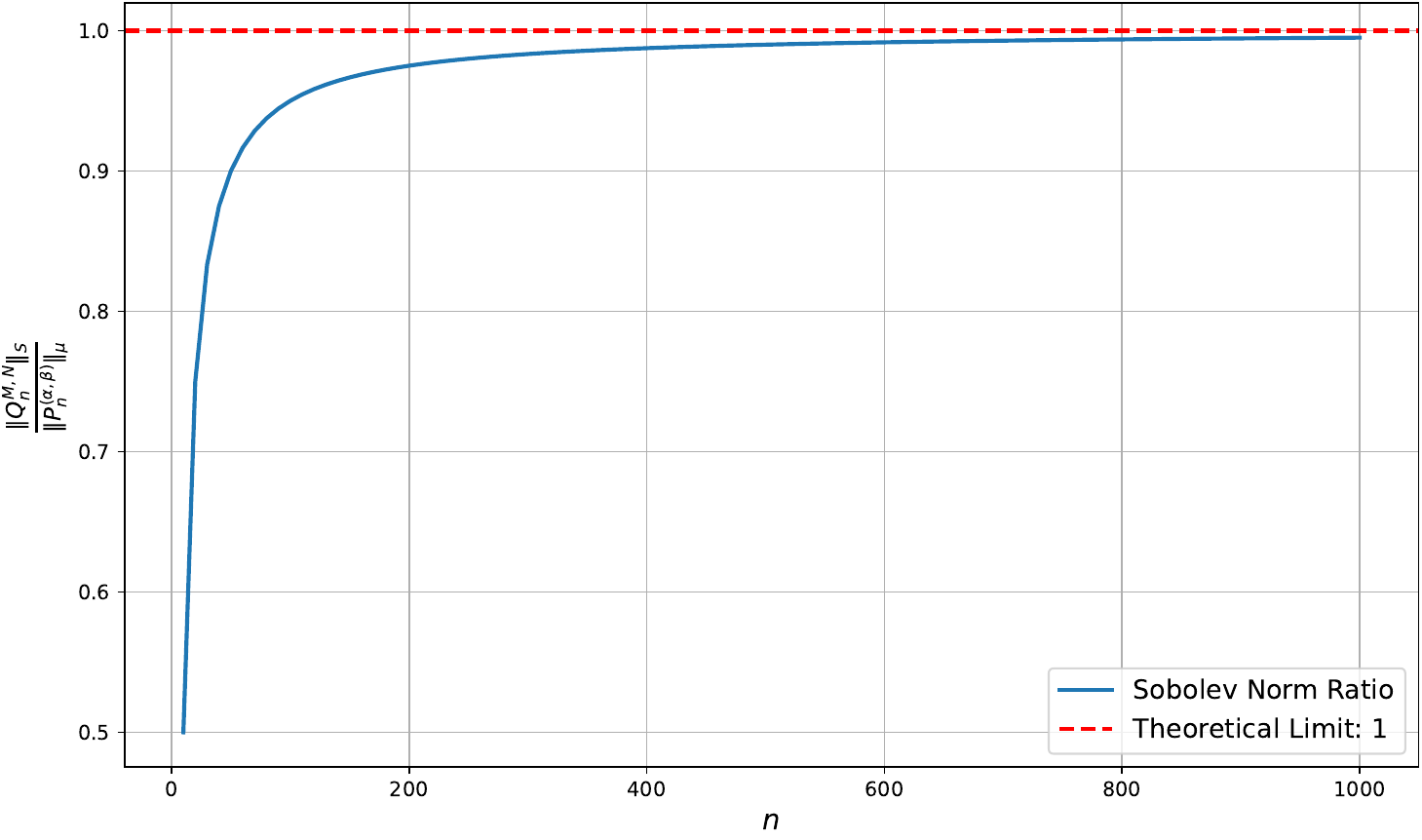}
		\caption{Convergence of Sobolev Norms to Jacobi Norms}
		\label{Fig2_r_n_s_j}
	\end{subfigure}

\end{figure}

\textbf{Acknowledgment:} This research did not receive any specific grant from funding agencies in the public, commercial, or not-for-profit sectors.



\end{document}